\newcommand{\abs}[1]{\lvert#1\rvert}
\newcommand{\lrabs}[1]{\left\lvert#1\right\rvert}
\DeclareMathOperator{\id}{id}
\renewcommand{\Im}{{\rm Im}}
\renewcommand{\Re}{{\rm Re}}
\DeclareMathOperator{\SL}{SL}
\DeclareMathOperator{\GL}{GL}
\newcommand{\defeq}{\coloneqq}
\newcommand{\nats}{\mathbb N}
\newcommand{\ints}{\mathbb Z}
\newcommand{\rats}{\mathbb Q}
\newcommand{\reals}{\mathbb R}
\newcommand{\complex}{\mathbb C}
\newcommand{\finfield}{\mathbb F}
\DeclareMathOperator{\Sym}{Sym}
\newcommandx{\set}[2][2=\empty]{\{#1\ifx#2\empty\else\mid#2\fi\}}
\newcommandx{\gensubalg}[2][2=\empty]{{\langle#1\ifx#2\empty\else\mid#2\fi\rangle}}
\newcommandx{\gensubgrp}[2][2=\empty]{\langle#1\ifx#2\empty\else\mid#2\fi\rangle}
\newcommand{\card}[1]{\lvert#1\rvert}
\newcommandx{\gensubsp}[2][2=\empty]{\langle#1\ifx#2\empty\else\mid#2\fi\rangle}
\newcommand{\Prob}{\mathbf{P}}
\DeclareMathOperator{\SO}{SO}
\DeclareMathOperator{\U}{U}
\renewcommand{\O}{{\rm O}}
\newcommandx{\lrset}[2][2=\empty]{\left\{#1\ifx#2\empty\else\mid#2\fi\right\}}
\DeclareMathOperator{\diag}{diag}
\DeclareMathOperator{\Cay}{Cay}
\newcommand{\trivgrp}{\mathbf{1}}
\theoremstyle{plain}
\newtheorem{theorem}{Theorem}
\newtheorem{lemma}{Lemma}
\newtheorem{corollary}{Corollary}
\theoremstyle{definition}
\newtheorem{remark}{Remark}
\title{On groups with unbounded Cayley graphs}
\author{Jakob Schneider}
\address{J.S., Institut f\"ur Geomtrie, TU Dresden, 01062 Dresden, Germany}
\email{jakob.schneider@tu-dresden.de}
\begin{document}

\begin{abstract}
	We show that every non-trivial compact connected group and every non-trivial general or special linear group over an infinite field admits a generating set such that the associated Cayley graph has infinite diameter.
\end{abstract}

\maketitle

A group $G$ is said to have the \emph{Bergman property} if for any symmetric generating subset $S\subseteq G$ containing the identity of $G$ there exists $k\in\nats$ such that $S^{\ast k}=G$, where $S^{\ast k}\defeq\set{s_1\cdots s_k}[s_1,\ldots,s_k\in S]$. The smallest such $k$ is called the \emph{width} of $S$ or the \emph{diameter} of the associated Cayley graph $\Cay(G,S)$. If there is no such $k$, $S$ is said to have infinite width.

Examples of such groups include finite groups, where the study of worst case diameters has a long history, but also infinite groups such as $\Sym(\nats)$ (a result due to Bergman \cite{bergman2006generating}). The first example of an infinite group with uniformly bounded width with respect to any generating set has been constructed by Shelah \cite[Theorem~2.1]{shelah1980problem}.

In \cite{dowerk2018bergman} Dowerk shows that unitary groups of \emph{infinite-dimensional} von Neumann algebras admit \emph{strong uncountable cofinality}. 
A group $G$ admits this property if for any exhausting chain $W_0\subseteq W_1\subseteq\cdots\subseteq G=\bigcup_{n=0}^\infty{W_n}$ of subsets there exist $n,k\in\nats$ such that $W_n^{\ast k}=G$ (see \cite[Definition~1.1]{rosendal2009topological}). It is apparent that, setting $W_n\defeq S^{\ast n}$, where $S$ is any symmetric generating set containing the identity, strong uncountable cofinality implies the Bergman property. Indeed, for uncountable groups it is actually equivalent to admitting both the Bergman property and \emph{uncountable cofinality} (see \cite[Proposition 2.2]{drostegoebel2005uncountable} or \cite{drosteholland2005generating}).
Regarding his result, Dowerk asked whether unitary groups of \emph{finite-dimensional} von Neumann algebras have the Bergman property.
In this note we answer this question in the negative by showing that any non-trivial compact connected group fails to have this property. We also show this for groups of type $\GL_n(K)$ or $\SL_n(K)\neq\trivgrp$ ($n\geq 1$), where $K$ is an infinite field.

Note that all of the above groups also fail to have uncountable cofinality. This is shown for $\SL_n(K)$ over an uncountable field in \cite[Proposition]{thomaszapletal2012steinhaus} and can easily be extended to all groups admitting a finite-dimensional representation with uncountable image. Such groups encompass compact groups by the Peter--Weyl theorem. This was pointed out by Cornulier in private communication.

Our main result is the following.

\begin{theorem}\label{thm:cpt_cn_grps} The following are true
\begin{enumerate}[(i)]
	\item Any non-trivial compact connected group does not admit the Bergman property.
	\item Any group of type $\GL_n(K)$ ($n\geq 1$) or $\SL_n(K)$ ($n\geq 2$), for an infinite field $K$, does not admit the Bergman property.
\end{enumerate}
\end{theorem}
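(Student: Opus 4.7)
The strategy in both parts is to exhibit, for each $k$, a concrete element of $G$ outside $S^{\ast k}$. The most natural way is to construct an unbounded subadditive ``length'' $\ell\colon G\to[0,\infty)$ (satisfying $\ell(gh)\le\ell(g)+\ell(h)$, $\ell(g^{-1})=\ell(g)$, $\ell(e)=0$) with $\ell(s)\le 1$ for all $s\in S$; then $S^{\ast k}\subseteq\ell^{-1}([0,k])$ is automatically a proper subset of $G$.

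For (i), I would first reduce to compact connected Lie groups: the structure theorem for compact groups gives a continuous surjection of any non-trivial compact connected $G$ onto a non-trivial compact connected Lie group $H$, and a bad generating set for $H$ pulls back to one for $G$. Given such a Lie $G$, fix a circle $T\cong\reals/\ints\subseteq G$ inside a positive-dimensional maximal torus, and use a Hamel basis of $\reals/\rats$ over $\rats$ to obtain a discontinuous surjective group homomorphism $\phi\colon T\to\rats$. When $G$ has non-trivial abstract abelianization admitting $T$ non-trivially (for instance whenever $G$ has a non-trivial central torus), injectivity of the divisible group $\rats$ extends $\phi$ to an abstract group homomorphism $\widetilde\phi\colon G\to\rats$, and $S:=\widetilde\phi^{-1}([-1,1]\cap\rats)$ is the desired generating set: $\ker\widetilde\phi\subseteq S$ forces $\langle S\rangle=G$, while $\widetilde\phi(S^{\ast k})\subseteq[-k,k]$ keeps $S^{\ast k}$ proper.

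For (ii), I would split on whether $K$ admits an unbounded rank-one valuation $v$. Such $v$ always exists in characteristic $0$ (Archimedean or $p$-adic on a subfield, extended to $K$ by Zorn) and in positive characteristic as soon as $K$ contains an element transcendental over $\finfield_p$ (via a $t$-adic valuation, again extended). Then define
\[
\ell(g):=\max_{i,j}\max(-v(g_{ij}),0)+\max_{i,j}\max(-v((g^{-1})_{ij}),0);
\]
subadditivity comes from $v\bigl(\sum_k g_{ik}h_{kj}\bigr)\ge\min_k\bigl(v(g_{ik})+v(h_{kj})\bigr)$, and unboundedness from that of $v$. With $S:=\{g:\ell(g)\le 1\}$, one verifies that conjugation by diagonal matrices of bounded valuation realises every elementary matrix as a short product of elements of $S$, so $\langle S\rangle=G$. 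The residual case is $K$ algebraic over $\finfield_p$: here write $K=\bigcup_i K_i$ as a strictly increasing union of finite subfields, giving $G=\bigcup_i G_i$ a strictly increasing chain of finite subgroups; pick a finite symmetric $T_i\subseteq G_i$ with $G_i=\langle G_{i-1},T_i\rangle$ and set $S:=G_1\cup\bigcup_{i\ge 2}T_i\cup\{e\}$. Any $g\in S^{\ast k}\cap G_M$ is a product of $k$ factors from a set of size $O(M)$, so $\lvert S^{\ast k}\cap G_M\rvert\le O(M)^k$; choosing the chain with $[K_M:\finfield_p]\ge 2^M$ forces $\lvert G_M\rvert$ to exceed this bound for $M$ large, whence $S^{\ast k}\subsetneq G$.

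The main obstacle of this plan is the semisimple case of (i), for example $G=\mathrm{SU}(n)$: there $G$ is abstractly perfect, so every group homomorphism $G\to\rats$ is trivial and the extension argument above breaks down. A natural alternative is to take $S=\phi^{-1}([-1,1])\cup C^{\pm1}\cup\{e\}$ for a non-trivial conjugacy class $C$ (which normally generates the simple $G$) and to bound $|\phi|$ on $S^{\ast k}\cap T$ linearly in $k$ by decomposing each word into excursions through conjugate tori $gTg^{-1}$; making this bookkeeping rigorous --- in particular controlling how much each excursion can change the $\phi$-value via the intersections $T\cap gTg^{-1}$ --- is the delicate point.
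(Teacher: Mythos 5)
Your part (i) argument only covers compact connected groups admitting a non-trivial abstract homomorphism to $\rats$, i.e.\ essentially those with a circle quotient. Every compact connected semisimple Lie group (e.g.\ $\mathrm{SU}(2)$, $\SO(3)$) is abstractly perfect, so this is the central case and it is exactly the one you leave open. Moreover, your proposed fallback cannot be repaired: if $C$ is a non-central conjugacy class of a compact connected simple Lie group $G$, then $CC^{-1}$ is a conjugation-invariant compact set whose generated subgroup is a non-central normal subgroup, hence all of $G$; by Baire category some $(CC^{-1})^{\ast m}$ has non-empty interior, and since $G$ is compact and connected one gets $(C\cup C^{-1}\cup\{e\})^{\ast k}=G$ for some finite $k$. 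Thus any $S$ containing $C^{\pm1}$ has finite width and can never witness failure of the Bergman property. The paper's proof of (i) avoids homomorphisms altogether: it constructs a subadditive, submultiplicative norm on $\complex$ with $\abs{x^2}=\abs{x}^2$ that is unbounded on $[0,1]$ (the Galois radius over $\rats(T)$, or an extended degree valuation), takes $S$ to be the elements of $\SO(2,\reals)$ with entries of bounded norm (generation via iterated square roots, properness via growth of entry norms), and then passes to arbitrary compact connected Lie groups using that finitely many conjugate copies of $\SO(2)$ generate, and finally to all compact connected groups via Peter--Weyl. Some ingredient of this kind is missing from your plan for the semisimple case.

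In part (ii), your valuation case is essentially the paper's Case~1 (and its characteristic-zero Case~2): entries of bounded valuation, generation of elementary matrices by conjugation with diagonal matrices, linear growth of the valuation along products; note only that an archimedean absolute value does not satisfy your ultrametric display (the bounded-entry estimate still gives the needed growth), and that the $\GL_n$ adjustment (adding $\diag(\lambda,1,\ldots,1)$ with bounded valuation) should be spelled out. The genuine gap is the residual case $K\subseteq\overline{\finfield}_p$: your bound $\card{S^{\ast k}\cap G_M}\le O(M)^k$ tacitly assumes the $k$ factors lie in $S\cap G_M$, but a product of elements of $S$ that lands in $G_M$ may use factors from $T_i$ with $i>M$ (already $ss^{-1}=e$ shows this, and nothing in your choice of the $T_i$ excludes many non-trivial collisions $s_1\cdots s_k\in G_M$ with high-level factors), so the inequality is unjustified. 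Controlling exactly these collisions is the heart of the matter; the paper spends Lemma~\ref{lem:comb_finflds} (via Schwartz--Zippel) and Corollary~\ref{cor:cnst_B} building an additive generating set $B$ of $K$ level by level so that bounded-degree polynomial expressions in $B$ falling into $\finfield_{p^{b_i}}$ must already come from $B_i$. You would need an analogous ``no unexpected collisions'' property for your sets $T_i$, which your sketch neither formulates nor proves.
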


The proofs use a few non-trivial facts from the theory of fields, for which we refer the reader to \cite{lang2002algebra}. For facts like the existence of a transcendence basis of $\reals$ over $\rats$ we need to assume the Axiom of Choice.

\section{Proof of Theorem~\ref{thm:cpt_cn_grps}}

This section is devoted to the proof of Theorem~\ref{thm:cpt_cn_grps}.
 
\begin{proof}[Proof of Theorem~\ref{thm:cpt_cn_grps}(\textnormal{i}):]
Let $\abs{\bullet}\colon\complex\to\reals_{\geq 0}$ be a \emph{norm} such that the following hold
\begin{enumerate}[(i)]
	\item $\abs{x}=0$ if and only if $x=0$ (identity of indiscernibles);
	\item $\abs{a+b}\leq\abs{a}+\abs{b}$ (subadditive);
	\item $\abs{ab}\leq\abs{a}\abs{b}$ (submultiplicative);
	\item $\abs{x^2}=\abs{x}^2$ (compatible with squaring);
	\item there exists $x\in[0,1]$ such that $\abs{x}>1$ (small elements with large norm).
\end{enumerate}
Examples of such norms are presented in Lemma~\ref{lem:cnst_nrms} below.

We now consider the matrix group $\SO(2,\reals)$. Set $C\defeq 4\abs{1/2}$. Then $C\geq 2\abs{1}=2$ by (ii) (as $\abs{1}=1$; see below). Define $S$ to be the set of elements of $\SO(2,\reals)$ with coefficients in $B\defeq\set{x\in\reals}[\abs{x}\leq C]$. Observe that $1\in S$, as (i) and (iv) imply that $\abs{1}=1$, and $S=S^{-1}$, as the inverse of $g\in \SO(2,\reals)$ is just $g^\top$. We claim that $S$ is a generating set and that $\SO(2,\reals)$ has infinite diameter with respect to $S$.
 
Assume that $g=(\begin{smallmatrix} a&b\\ -b&a \end{smallmatrix})\in \SO(2,\reals)$ corresponds to an element $z\in \U(1)\subseteq\complex$ via the isomorphism $(\begin{smallmatrix} a&b\\ -b&a \end{smallmatrix}) \mapsto a+bi$.
Take $k\in\nats$ large enough such that 
$$
\abs{z^{1/2^k}},\abs{z^{-1/2^k}}\leq 2.$$
This is possible by (iv).
Set $a'\defeq\Re(z^{1/2^k})$ and $b'\defeq\Im(z^{1/2^k})$. By property (iv) we have $\abs{i}=1$ as $\abs{i^4}=\abs{i}^4=\abs{1}=1$. From (ii), (iii) we derive that
$$
\abs{a'}=\lrabs{\frac{1}{2}(z^{1/2^k}+z^{-1/2^k})}\leq\lrabs{\frac{1}{2}}(\abs{z^{1/2^k}}+\abs{z^{-1/2^k}})\leq C.
$$
Similarly, using that $\abs{i}=1$, we obtain
$$
\abs{b'}=\lrabs{\frac{1}{2i}(z^{1/2^k}-z^{-1/2^k})}\leq\lrabs{\frac{1}{2}}(\abs{z^{1/2^k}}+\abs{z^{-1/2^k}})\leq C.
$$
Since $(a'+b'i)^{2^k}=z$, we conclude that $g\in S^{\ast2^k}$. Since $g$ was arbitrary, $S$ follows to be a generating set of $\SO(2,\reals)$.

It remains to show that $\SO(2,\reals)$ does not have finite diameter with respect to $S$. Indeed, for any $R\in\reals_{\geq 0}$ on finds $k$ large enough such that, if $x$ is an element as in (v), $\abs{x^{2^k}}=\abs{x}^{2^k}\geq R$. Now putting $a\defeq x^{2^n}$ and $b\defeq\sqrt{1-a^2}$, we obtain an element $g=a+bi\in\U(1)\cong\SO(2,\reals)$ which needs arbitrarily many factors in $S$ to be represented (taking $R$ large enough). Indeed, by induction, all coordinate entries of an element in $S^{\ast k}$ have norm at most $2^{k-1}C^k$ ($k\in\ints_+$).

Now let $H$ be a non-trivial compact connected Lie group represented as a matrix subgroup of $\O(n)$ for some $n\in\nats$. We observe that finitely many copies $H_i$ ($i=1,\ldots,m$) of $\SO(2,\reals)$ in $H$ generate $H$ as a group, e.g., see \cite[Theorem~2]{dalessandro2002uniform}. 
This allows us to extend our argument for $\SO(2,\reals)$ above to the group $H$ as follows. 
Assume that $H$ is generated by
$$
H_i= \lrset{g_i^{-1}\left(\begin{pmatrix} a & b\\-b&a \end{pmatrix}\oplus\id_{n-2}\right) g_i}[a,b\in\reals,\ a^2+b^2=1]
$$
for some $g_i\in\O(n)$ ($i=1,\ldots,m$). After conjugating by $g_1^{-1}$ we may assume that $g_1=1$.
As a generating set $S$ for $H$ it now suffices to take the set of elements in $H_i$ ($i=1,\ldots,m$) with coefficients $a,b\in B$, where $B$ is defined as above. Apparently, $S$ generates $H_i$ ($i=1,\ldots,m$) and hence $H$.
Let $D$ be the maximum of the norms of the matrix entries of the $g_i$ ($i=1,\ldots,m$; and so it is the maximum of the norms of the entries of the $g_i^{-1}=g_i^\top$). Then any coordinate entry of an element in $S\cap H_i$ has norm bounded by $c\defeq n^2D^2C$. Thus, by induction, for $g\in S^{\ast k}$, all coordinate entries of $g$ have norm at most $n^{k-1}c^k$ ($k\in\ints_+$). Therefore, $S$ cannot generate $H$ in finitely many steps, since, as above, the coordinate entries of $H_1\cong\SO(2,\reals)$ are unbounded with respect to the norm. 
 
If $H$ is now any non-trivial compact connected group, then by the Peter--Weyl theorem $H$ has a non-trivial finite-dimensional unitary representation, say $\pi \colon H\rightarrow \U(n) \subseteq\O(2n)$. Let $S$ be a generating set in $\O(2n)$ witnessing that $\pi(H)$ does not have the Bergman property. Then $\pi^{-1}(S)$ is a generating set witnessing that $H$ does not have the Bergman property. 
\end{proof}

It remains to construct norms satisfying (i)--(v). This is done in the subsequent lemma.

\begin{lemma}\label{lem:cnst_nrms}
	Let $T$ be a transcendence basis of $\reals$ over $\rats$. Consider $K\defeq\rats(T)$ and let $G$ denote the Galois group of the field extension $\complex/K$. The following norms $\abs{\bullet}\colon\complex\to\reals_{\geq 0}$ satisfy properties (i)--(v) from the proof Theorem~\ref{thm:cpt_cn_grps} (ii).
	\begin{enumerate}[(i)]
		\item Since, $K\subseteq\complex$ is algebraic, $G$ is a profinite group and each element in $\complex$ has a finite orbit under the action of $G$.
		For $x\in \complex$ we define the \emph{Galois radius} of $x$ to be 
		$\rho(x)\defeq\max_{\sigma\in G}\abs{x^\sigma}$. Then $\abs{\bullet}\defeq \rho$ defines a norm with the desired properties.
		\item Choose $t\in T$. Set $L\defeq \rats(T\setminus\set{t})$. Let $\nu\colon K\to\ints$ be the degree valuation corresponding to $t$ on $K$. Extend $\nu$ to a valuation $\omega\colon\complex\to\rats$ (by using Zorn's lemma). Setting $\abs{x}\defeq\exp(-\omega(x))$ gives a norm on $\complex$ satisfying the above properties.
	\end{enumerate}
	
\end{lemma}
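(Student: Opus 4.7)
The plan is to verify properties (i)--(v) for each construction, handling (i)--(iv) as routine consequences of the underlying algebraic structure and concentrating the real work on (v).

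For the Galois radius \(\rho\), the first step is to observe that since \(T\) is a transcendence basis of \(\reals\) over \(\rats\), the tower \(K\subseteq\reals\subseteq\complex\) consists of algebraic extensions, so \(\complex/K\) is algebraic; it is moreover Galois, being normal (as \(\complex\) is algebraically closed) and separable (characteristic zero). In particular each \(x\in\complex\) has a finite \(G\)-orbit, namely the roots of its minimal polynomial over \(K\), so \(\rho(x)\) is a well-defined maximum. Properties (i)--(iv) then follow by applying the usual complex modulus to each conjugate \(x^\sigma\) and exploiting that \(\sigma\) is a field automorphism; for instance \(\rho(x^2)=\max_\sigma\abs{(x^\sigma)^2}=\max_\sigma\abs{x^\sigma}^2=\rho(x)^2\), and subadditivity and submultiplicativity are inherited analogously from \(\complex\). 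For (v) I would exhibit the two real roots \(\alpha\defeq(3-\sqrt 5)/2\in(0,1)\) and \(\beta\defeq(3+\sqrt 5)/2>1\) of \(X^2-3X+1\). Since \(K=\rats(T)\) is purely transcendental over \(\rats\), it contains no element of \(\overline\rats\setminus\rats\); in particular \(\sqrt 5\notin K\), so \(X^2-3X+1\) is the minimal polynomial of \(\alpha\) over \(K\), some \(\sigma\in G\) sends \(\alpha\) to \(\beta\), and thus \(\rho(\alpha)\geq\beta>1\).

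For the valuation norm, the degree valuation on \(K=L(t)\) is \(\nu(p/q)\defeq\deg_t q-\deg_t p\) for \(p,q\in L[t]\), so that \(\nu|_L=0\) and \(\nu(t)=-1\). Chevalley's extension theorem (via Zorn's lemma on the poset of valuations of subextensions of \(\complex/K\) dominating \(\nu\)) yields an extension \(\omega\colon\complex^\times\to\rats\), and we set \(\abs x\defeq\exp(-\omega(x))\) with \(\abs 0\defeq 0\). Then (i) is just \(\omega(x)=+\infty\Leftrightarrow x=0\); (iii) (in fact with equality) and (iv) come from the multiplicativity \(\omega(xy)=\omega(x)+\omega(y)\); and the ultrametric inequality \(\omega(x+y)\geq\min(\omega(x),\omega(y))\) yields \(\abs{x+y}\leq\max(\abs x,\abs y)\leq\abs x+\abs y\), establishing (ii).

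The main obstacle in each construction is property (v), which in both cases requires a real number in \([0,1]\) whose algebraic behaviour outweighs its archimedean smallness. For \(\rho\) the quadratic \(X^2-3X+1\) works uniformly, as just shown. For the valuation norm, I would pick any integer \(N\) strictly exceeding the ordinary absolute value of \(t\) and set \(x\defeq t/N\) or \(x\defeq -t/N\) according to the sign of \(t\); then \(x\in(0,1)\subseteq[0,1]\) and \(\omega(x)=\omega(t)-\omega(N)=-1-0=-1\) (using \(\omega(-1)=0\), forced by \(2\omega(-1)=\omega((-1)^2)=\omega(1)=0\)), so \(\abs x=\exp(1)=e>1\), completing (v).
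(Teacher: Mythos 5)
Your proof is correct, and for part (ii) it follows essentially the paper's own route: extend the degree valuation to $\complex$ by Zorn's lemma/Chevalley, read off (i)--(iv) from the valuation axioms, and rescale $t^{\pm1}$ by a nonzero rational (which has valuation $0$) to land in $[0,1]$ while keeping norm $e>1$; the only difference is a sign convention for the degree valuation, which you make explicit and use consistently. For part (i) your treatment of the key property (v) is genuinely different from the paper's: the paper takes arbitrary $a\in(0,1)$, $b>1$, and approximates $(X-a)(X-b)$ by a monic rational quadratic that is irreducible by Eisenstein's criterion (hence irreducible over $K=\rats(T)$ by Gauss' lemma), then invokes continuity of roots to get a root in $(0,1)$ whose $K$-conjugate has modulus close to $b$. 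You instead exhibit the single explicit polynomial $X^2-3X+1$ with roots $(3\mp\sqrt5)/2$, and use that $\rats$ is algebraically closed in the purely transcendental extension $K=\rats(T)$, so $\sqrt5\notin K$, the quadratic stays irreducible, and transitivity of $G=\mathrm{Gal}(\complex/K)$ on its roots gives $\rho\bigl((3-\sqrt5)/2\bigr)\geq(3+\sqrt5)/2>1$. Your argument is shorter and avoids Eisenstein and the implicit function theorem, at the cost of relying on the (standard) relative algebraic closedness of $\rats$ in $\rats(T)$, which plays the role the paper assigns to Gauss' lemma; the paper's approximation argument buys the extra flexibility of placing the small root near any prescribed $a\in(0,1)$ and the large conjugate near any $b>1$, which is more than property (v) actually requires. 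Both arguments correctly use that $\complex/K$ is algebraic, normal and separable, so that orbits are finite and $G$ acts transitively on the roots of an irreducible polynomial.
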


\begin{proof}[Proof of Lemma~\ref{lem:cnst_nrms}]
	In both cases, we verify properties (i)--(v).
	
	\emph{(i):} Properties (i)--(iv) are immediate from the definition.
	We now show that also property (v) is fulfilled. Choose arbitrary real numbers $a\in(0,1)$ and $b>1$. Set $p(X)\defeq (X-a)(X-b)\in\reals[X]$. We claim that by density of $K\supseteq\rats$ in $\reals$ we can find an irreducible polynomial over $K$ with coefficients arbitrarily close to those of $p(X)$. Indeed, by Gauss' lemma, an irreducible polynomial over $\rats$ remains irreducible over $K$ and hence, it suffices to approximate by irreducible rational polynomials. Using Eisenstein's criterion, we can find an irreducible monic rational polynomial $q(X)$ which has coefficients arbitrarily close to the coefficients of $p(X)$. Indeed, Eisenstein's criterion implies that for $\alpha,\beta,\gamma\in\mathbb Z$, $\gamma>0$, the polynomial $q(X)=X^2+(\alpha/\gamma)X+(\beta/\gamma)$ is irreducible if $p^2$ does not divide $\beta$, $p$ divides $\alpha$ and $\beta$ and does not divide $\gamma$. Choosing $\gamma$ large enough and coprime to $p$, we can easily find $\alpha$ and $\beta$ with the desired properties such that $\alpha/\gamma$ is close to $-(a+b)$ and $\beta/\gamma$ is close to $ab$.
	By the implicit function theorem, the zeroes of $q$, say $x$ and $y$, are arbitrarily close to $a$ and $b$, respectively. Hence $\rho(x)$ is close to $b>1$, as desired.
	
	\emph{(ii):} Properties (i)--(iv) follow from the definition of a valuation. For property (v) observe that $\abs{t^{-1}}=\exp(\nu(t^{-1}))=e>1$. Also, for $y\in\rats$ we have $\abs{y}=\exp(\nu(y))=1$. Taking $y$ so that $x\defeq t^{-1}y\in[0,1]$, we obtain that for $\abs{x}=\abs{y}\abs{t^{-1}}=\abs{t^{-1}}=e>1$, as wished.
\end{proof}

\begin{remark}
	In the proof of Theorem~1(i) and Lemma~\ref{lem:cnst_nrms} the field $\reals$ can be replaced by a Euclidean field $R$ and $\complex$ by $R[i]$. Then, if $T=\emptyset$ in Lemma~\ref{lem:cnst_nrms}(ii) we need to take a $p$-adic valuation, instead of the degree valuation, on $K=\rats$ and extend it. Then, for $y$, in the above argument, we have to take a suitable element $r/s\in\rats$ such that $p\nmid r,s$.
\end{remark}

\begin{proof}[Proof of Theorem~\ref{thm:cpt_cn_grps}\textnormal{(ii)}]
	At first we consider the case $G=\SL_n(K)$. We distinguish two cases.
	
	\emph{Case~1:} Assume first that a transcendence basis $T$ of $K$ over its prime field $k$ is not empty. Define for $\lambda\in K^\times$, $\mu\in K$ the matrices
	$$
	D(\lambda)\defeq\begin{pmatrix}
	\lambda^{-1} & 0\\
	0 & \lambda
	\end{pmatrix}
	\text{ and }
	E_{12}(\mu)\defeq\begin{pmatrix}
	1 & \mu\\
	0 & 1
	\end{pmatrix}.	
	$$
	Note that 
	\begin{equation}\label{eq:el_mats_id}
	E_{12}(\mu)^{D(\lambda_1)\cdots D(\lambda_n)}=E_{12}(\mu)^{D(\lambda_1\cdots\lambda_n)}=E_{12}((\lambda_1\cdots\lambda_n)^2\mu)
	\end{equation}
	for $\lambda_1,\ldots,\lambda_n\in K^\times$, $\mu\in K$. Choose $t\in T$. For $L\defeq k(T\setminus\set{t})$  we have that $K$ is algebraic over $L(t)$.
	Take the normed degree valuation on $\nu\colon L(t)\to\ints$ and extend it to a valuation $\omega\colon K\to\rats$. Define $B\defeq\set{b\in K}[\abs{\omega(b)}\leq 1]$.
	Set 
	$$
	S\defeq\set{g\in\SL_n(K)}[\text{all entries of }g\text{ are in }B].
	$$
	Then $S^{\ast k}\neq\SL_n(K)$, since one sees easily by induction that for $g\in S^{\ast k}$ $\abs{\omega(g_{ij})}\leq k$ (from the strong triangle inequality). We show that $S$ generates all elementary matrices, and hence $\SL_n(K)$.
	Indeed, we may assume that $n=2$, via the embeddings $\SL_2(K)\hookrightarrow\SL_n(K)$. We show that $S$ generates any matrix $M\defeq E_{12}(\alpha)$ for $\alpha\in K$. Indeed, if $\alpha=0$, then $M\in S$. In the opposite case $v\defeq\omega(\alpha)\in\rats$ and we find an integer $n\in\ints$ such that $\abs{v-2n}\leq 1$.
	Then choose $\lambda\in K$ such that $\omega(\lambda)=1$ (which exists even in $L(t)\subseteq K$, since $\nu$ was normed) and set $\mu\defeq\alpha\lambda^{-2n}$. By construction, we have $\abs{\omega(\mu)}\leq 1$, so that $D(\lambda),E_{12}(\mu)\in S$ and $M=E_{12}(\alpha)=E_{12}(\mu)^{D(\lambda)^{2n}}\in\gensubgrp{S}$ by Equation~\eqref{eq:el_mats_id}. This completes the proof of Case~1.
	
	\emph{Case~2:} In this case $K$ is an algebraic extension of its prime field $k=\rats$ or $k=\finfield_p$. In the first case, $K$ embeds into $\complex$ and we can define 
	$$
	S\defeq\set{g\in\SL_n(K)}[\text{all entries of }g\text{ are in }B],
	$$
	where $B$ is the unit ball of $\complex$ intersected with $K\subseteq\complex$.
	
	Hence we assume that $k=\finfield_p$ and $K$ is an infinite algebraic extension of $k$.
	In this case, we construct a set $B\subseteq K$ with the following properties
	\begin{enumerate}[(i)]
		\item $\gensubgrp{B}_{+}=K$, i.e., $B$ generates $K$ as an abelian group;
		\item $P(B)\defeq\set{p(b_1,\ldots,b_m)}[b_1,\ldots,b_m\in B, p\in P]\neq K$ for each finite set $P\subseteq\ints[X_1,\ldots,X_m]$ of polynomials over $\ints$ and $m\in\nats$.	
	\end{enumerate}
	Indeed, if we have such a set $B$, we can use the same definition for $S$ as above. This is due to the fact that $E_{12}(\mu)E_{12}(\lambda)=E_{12}(\mu+\lambda)$ and the elementary matrices generate $\SL_n(K)$. On the other hand $S^{\ast k}\neq\SL_n(K)$ by condition (ii), as the matrix entries are bounded degree polynomials over $\ints$ in the entries of the matrices.
	
	The set $B$ is now inductively constructed in Lemma~\ref{lem:comb_finflds} and Corollary~\ref{cor:cnst_B} below.
	
	In the case $G=\GL_n(K)$ we add matrices $\diag(\lambda,1,\ldots,1)$ to the generating set $S$ with $\abs{\omega(\lambda)}\leq 1$.
\end{proof}

\begin{lemma}\label{lem:comb_finflds}
	Fix a set $P\subseteq\finfield_p[X_1,\ldots,X_m]$ of non-constant polynomials of total degree at most $n$, i.e., especially $P$ is finite.
	Consider the inclusion of finite fields $\finfield_{p^e}\subseteq\finfield_{p^{ef}}$ for $e,f\in\ints_+$ and let $E\subseteq\finfield_{p^e}$ be a subset of cardinality $e$. Define $P_E\subseteq\finfield_{p^e}[X_1,\ldots,X_m]$ as the set of non-constant polynomials which arise from the polynomials from $P$ by substituting a subset of the variables $X_1,\ldots,X_m$ by elements from $E$. Then for $e$ sufficiently large there exists a set $F\subseteq\finfield_{p^{ef}}$ such that $\gensubsp{F}_{\finfield_p}$ is a complement to $\finfield_{p^e}$ in $\finfield_{p^{ef}}$ as $\finfield_p$-vector spaces and $r(f_1,\ldots,f_m)\not\in\finfield_{p^e}$ for all $f_1,\ldots,f_m\in F$ and $r\in P_E$.
\end{lemma}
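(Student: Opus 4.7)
The plan is a counting-plus-probabilistic argument combining a size estimate for $P_E$ with a Schwartz--Zippel bound on the bad set of each polynomial, followed by a random greedy choice of $F$.

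First, I would bound $|P_E|$: every $r\in P_E$ arises from some $\tilde r\in P$ by choosing a subset $I\subseteq\{1,\ldots,m\}$ of variables to substitute and an assignment $I\to E$, so $|P_E|\le|P|(1+e)^m$, polynomial in $e$ of bounded degree~$m$. Next, for each $r\in P_E$ in $m'\le m$ remaining variables I would bound
\[B_r\defeq\set{\vec x\in\finfield_{p^{ef}}^{m'}}[r(\vec x)\in\finfield_{p^e}].\]
The characterisation $\alpha\in\finfield_{p^e}\iff\alpha^{p^e}=\alpha$ shows $B_r$ is the zero set of $q_r(\vec X)\defeq r(\vec X)^{p^e}-r(\vec X)$. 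Since $r$ has coefficients in $\finfield_{p^e}$, Frobenius gives $r(\vec X)^{p^e}=r(X_1^{p^e},\ldots,X_{m'}^{p^e})$, and comparison of monomials shows that $q_r$ vanishes as a polynomial only if $r$ is constant; hence $q_r\neq 0$ of total degree $\le np^e$. Schwartz--Zippel (applicable once $np^e<p^{ef}$) then yields $|B_r|\le np^e\cdot p^{ef(m'-1)}$, i.e.\ density $\le np^{-e(f-1)}$ inside $\finfield_{p^{ef}}^{m'}$.

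For the construction, I would choose $F$ greedily at random: pick $f_1,\ldots,f_{e(f-1)}$ one by one, with $f_i$ uniform on $\finfield_{p^{ef}}\setminus(\gensubsp{f_1,\ldots,f_{i-1}}_{\finfield_p}+\finfield_{p^e})$, so that $\gensubsp{F}_{\finfield_p}$ is automatically a complement of $\finfield_{p^e}$. For each $r\in P_E$ in $m'$ variables I would estimate the expected number of bad tuples in $F^{m'}$ by splitting tuples according to the collision pattern $\pi$ of their coordinates (how many and which coincide): for each pattern with $k$ cells the tuples are governed by the reduced polynomial $r_\pi$ in $k$ variables, to which the same Schwartz--Zippel argument applies provided $r_\pi$ is non-constant, and the resulting contribution is a polynomial in $e$ times $p^{-e(f-1)}$. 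Summing over all patterns and over $r\in P_E$ via the bound on $|P_E|$, the total expectation still decays like a polynomial in $e$ times $p^{-e(f-1)}$, hence is less than $1$ for $e$ large, and a valid $F$ therefore exists.

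The main obstacle is the degenerate collision patterns where $r_\pi$ collapses to a constant that lies in $\finfield_{p^e}$ (as happens for $r=X_1-X_2$ when the two variables are identified): for these every tuple of distinct coordinates is ``bad'' and the naive probabilistic bound fails. Handling this requires using the non-constancy hypothesis on $r\in P_E$ to control which collision patterns can cause such a collapse, or equivalently strengthening the inductive hypothesis on $F$ so that no already-chosen $f_i$'s realise a degenerate pattern.
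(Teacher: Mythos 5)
Up to the point where you stop, your plan is essentially the paper's proof. The paper likewise bounds $\card{P_E}\le(e+1)^m\card{P}$, obtains the density bound $n/p^{e(f-1)}$ for the event $r(s_1,\ldots,s_m)\in\finfield_{p^e}$ from Schwartz--Zippel (by summing the pointwise bound $n/p^{ef}$ over the $p^e$ possible values, rather than via your auxiliary polynomial $r^{p^e}-r$, which yields the same estimate), and then takes a union bound over all $(e(f-1))^m$ ways of plugging the coordinates of a uniformly random $e(f-1)$-tuple $t$ into some $r\in P_E$. The only structural difference is that instead of your sequential greedy choice it lower-bounds the probability that a uniform tuple spans a complement of $\finfield_{p^e}$ by the explicit product $(1-p^{-e(f-1)})\cdots(1-p^{-1})\ge d>0$ and concludes that for $e$ large the two events intersect; your greedy variant would work just as well for these estimates, at the cost of a bounded factor $(p/(p-1))^m$ in the Schwartz--Zippel step.

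The step you leave open is, however, a genuine gap in your proposal --- and it is precisely the weak point of the paper's own argument. The paper's union bound ranges over all assignments $s_1,\ldots,s_m\in\overline{t}$, including those repeating coordinates, but the probability estimate it invokes is only valid for injective assignments (i.i.d.\ uniform $s_i$); for a repeating assignment the relevant polynomial is your collapsed $r_\pi$, which may be a constant lying in $\finfield_{p^e}$. Worse, for the statement as literally written, with repetitions allowed in ``for all $f_1,\ldots,f_m\in F$'', no argument can close this: if $X_1-X_2\in P$ (as happens for the sets $P_i$ used in Corollary~\ref{cor:cnst_B}), then $r=X_1-X_2\in P_E$ and $r(f,f)=0\in\finfield_{p^e}$ for every $f\in F$. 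What is actually proved by the paper's computation (and what your argument proves as well) is the version for pairwise distinct $f_1,\ldots,f_m$, i.e.\ injective substitutions; non-constant collapses $r_\pi$ are then automatically covered by the same bound, since identifying variables does not increase the total degree, while constant collapses have to be handled at the point of application (e.g.\ using that the $P_i$ of the corollary are closed under identification of variables). So your approach coincides with the paper's, and the obstacle you flag is real; but it is not resolved in the paper either, and it disappears once the conclusion of the lemma is read with distinct $f_i$.
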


\begin{proof}
	At first note that the set $C$ of $e(f-1)$-tuples with entries in $\finfield_{p^{ef}}$ which span an $\finfield_p$-complement of $\finfield_{p^e}$ in $\finfield_{p^{ef}}$ is of cardinality
	$(p^{ef}-p^e)\cdots(p^{ef}-p^{ef-1})$, so its portion in the set $T$ of all $e(f-1)$-tuples with entries in $\finfield_{p^{ef}}$ is equal to
	$$
	\card{C}/p^{e^2f(f-1)}=(1-p^{-e(f-1)})\cdots(1-p^{-1})
	$$
	but for $c\defeq 2\log(2)$ we have $e^{-cx}\leq 1-x$ for $0\leq x\leq 1/2$, so that the above expression is bounded from below by
	$$
	e^{-c\sum_{i=1}^{e(f-1)}{p^{-i}}}\geq d\defeq e^{-\frac{c}{p-1}}\in (0,1).
	$$
	
	Now let us estimate how many tuples $t=(t_1,\ldots,t_{e(f-1)})\in T$ have the property that $r(s_1,\ldots,s_m)\not\in\finfield_{p^e}$ for all $s_1,\ldots,s_m\in\overline{t}\defeq\set{t_1,\ldots,t_{e(f-1)}}$ and $r\in P_E$. By the Schwartz-Zippel lemma, for all $x\in\finfield_{p^e}$ we have
	$$
	\Prob_{s_1,\ldots,s_n\in\finfield_{p^{ef}}}[r(s_1,\ldots,s_m)=x]\leq n/p^{ef},
	$$
	so that
	$$
	\Prob_{s_1,\ldots,s_n\in\finfield_{p^{ef}}}[r(s_1,\ldots,s_m)\in\finfield_{p^e}]\leq n/p^{e(f-1)},
	$$
	and hence
	$$
	\Prob_{t\in T}[\exists r\in P_E,s_1,\ldots,s_m\in\overline{t}:r(s_1,\ldots,s_m)\in\finfield_{p^e}]\leq \card{P_E}(e(f-1))^m n/p^{e(f-1)}.
	$$
	Putting both estimates together, we obtain that the set of $t\in T$ such that $F\defeq\overline{t}$ satisfies the hypothesis of the lemma has portion bounded from below by $d-\card{P_E}(e(f-1))^m n/p^{e(f-1)}\geq d-(e+1)^m\card{P}(e(f-1))^m n/p^{e(f-1)}$ in $T$. But this term is clearly positive for all $f>1$ when $e$ is large enough.
\end{proof}

\begin{corollary}\label{cor:cnst_B}
	Let $K\subseteq\overline{\finfield}_p$ be infinite. There exists a set $B\subseteq K$ satisfying (i) and (ii) in the proof of Theorem~\ref{thm:cpt_cn_grps}(ii).
\end{corollary}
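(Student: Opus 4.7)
The plan is to build $B$ by induction along a tower of finite subfields of $K$, diagonalising against the countably many finite polynomial sets we need to defeat. Since $K\subseteq\overline{\finfield}_p$ is infinite, the set $K^*\defeq\set{d\in\ints_+}[\finfield_{p^d}\subseteq K]$ is closed under least common multiple and unbounded, so I can fix an increasing divisibility chain $e_1\mid e_2\mid\cdots$ in $K^*$ that exhausts $K^*$, whence $K=\bigcup_i\finfield_{p^{e_i}}$. I enumerate all finite sets of polynomials over $\finfield_p$ in finitely many variables as $\mathcal{P}_1,\mathcal{P}_2,\ldots$ and write $\mathcal{Q}_i\subseteq\mathcal{P}_i$ for the non-constant part; every finite integer polynomial set reduces mod $p$ to some $\mathcal{P}_i$, so verifying (ii) for each $\mathcal{P}_i$ is enough.

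Given $B_i$ an $\finfield_p$-basis of $\finfield_{p^{e_i}}$ of size $e_i$, I apply Lemma~\ref{lem:comb_finflds} with $e=e_i$, $E=B_i$, $P=\mathcal{Q}_1\cup\cdots\cup\mathcal{Q}_i$, and $f$ chosen large enough that (a) the lemma's conclusion holds, (b) $e_{i+1}\defeq e_i f\in K^*$ and clears the first $i$ entries of a fixed enumeration of $K^*$, and (c) $p^{e_{i+1}}>|\mathcal{P}_i|\cdot e_{i+1}^{m_i}$, where $m_i$ is the maximum number of variables occurring in $\mathcal{P}_i$. The lemma then produces $F_{i+1}\subseteq\finfield_{p^{e_{i+1}}}$, an $\finfield_p$-complement of $\finfield_{p^{e_i}}$, with $r(f_1,\ldots,f_l)\notin\finfield_{p^{e_i}}$ for all $f_j\in F_{i+1}$ and every $r\in(\mathcal{Q}_1\cup\cdots\cup\mathcal{Q}_i)_{B_i}$. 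Setting $B_{i+1}\defeq B_i\cup F_{i+1}$ and $B\defeq\bigcup_i B_i$, one has $|B_{i+1}|=e_{i+1}$ (since $F_{i+1}\cap\finfield_{p^{e_i}}=\emptyset$), and property~(i) is immediate: $\gensubsp{B}_{\finfield_p}=\bigcup_i\finfield_{p^{e_i}}=K$ and $\finfield_p$ is the prime field.

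For~(ii) I fix $k$ and take any $x\in\mathcal{P}_k(B)\cap\finfield_{p^{e_{k+1}}}$. Constant polynomials in $\mathcal{P}_k$ contribute at most $|\mathcal{P}_k|$ fixed values, so I may assume $x=r(b_1,\ldots,b_{m_k})$ with $r\in\mathcal{Q}_k$ and $b_i\in B$. The claim is $x\in\mathcal{Q}_k(B_{k+1})$, which I prove by induction on the least $j$ with all $b_i\in B_j$. For $j\geq k+2$, substitute the entries $b_i\in B_{j-1}$ into $r$ to obtain a polynomial $r'$ in the remaining variables. If $r'$ were non-constant, then $r'\in(\mathcal{Q}_1\cup\cdots\cup\mathcal{Q}_{j-1})_{B_{j-1}}$ because $k\leq j-1$, so the lemma at stage $j-1$ would force $x\notin\finfield_{p^{e_{j-1}}}$, contradicting $x\in\finfield_{p^{e_{k+1}}}\subseteq\finfield_{p^{e_{j-1}}}$. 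Hence $r'$ is constant, $x$ does not depend on the $F_j$-entries, and I may replace them by any element of $B_{j-1}$ (nonempty since $e_{j-1}\geq 1$) to realise $x\in\mathcal{Q}_k(B_{j-1})$; iterating drops $j$ down to $k+1$. Combined with $|\mathcal{P}_k(B)\cap\finfield_{p^{e_{k+1}}}|\leq|\mathcal{P}_k|+|\mathcal{Q}_k|\cdot e_{k+1}^{m_k}<p^{e_{k+1}}$ (by~(c)), this exhibits some $y\in\finfield_{p^{e_{k+1}}}\subseteq K$ missing from $\mathcal{P}_k(B)$.

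The main obstacle I anticipate is precisely this inductive collapse in~(ii): at each level $j>k+1$ one must use the lemma's forbidden-value clause to force the substituted polynomial $r'$ to be constant, and then handle the constant case by exploiting the nonemptiness of $B_{j-1}$ to rewrite $x$ as an evaluation on $B_{j-1}$. The remaining bookkeeping—choosing each $e_{i+1}$ simultaneously in $K^*$, exhausting $K^*$ in the limit, satisfying the lemma's largeness requirement, and satisfying the counting inequality~(c)—is routine because $K^*$ is closed under lcm and contains arbitrarily large multiples of every one of its elements.
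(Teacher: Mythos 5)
Your proposal is correct and takes essentially the same route as the paper: iterate Lemma~\ref{lem:comb_finflds} along a tower of nested bases $B_i$ of finite subfields exhausting $K$, and verify (ii) by collapsing any evaluation landing in a fixed finite level $\finfield_{p^{e_{k+1}}}$ down to $B_{k+1}$ (forcing the substituted polynomial to be constant via the lemma's forbidden-value clause) and then counting. You are in fact somewhat more careful than the paper about the subfield structure of $K$ and the constant-substitution case; the only point to flag is that your fixed-$e$, large-$f$ invocation of Lemma~\ref{lem:comb_finflds} is the variant supplied by its proof (the error term tends to $0$ as $f\to\infty$ for fixed $e$) rather than its literal ``$e$ sufficiently large'' statement, which you could alternatively sidestep, as the paper does, by choosing each $e_{i+1}$ large enough for the next stage's polynomial set.
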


\begin{proof}
	Set $P_i\defeq\finfield_p[X_1,\ldots,X_i]_{\deg\leq i}$ for $i\in\nats$, so that the $P_i$ exhaust the polynomial ring over $\finfield_p$ in the countably many variables $X_i$ ($i\in\ints_+$). 
	Apply Lemma~\ref{lem:comb_finflds} to $P=P_0$, and choose $b_0\defeq e$ large enough and an $\finfield_p$-basis $B_0\defeq E$ of $\finfield_{p^{b_0}}$ such that for all $f>1$ and an appropriate extension $F\subseteq\finfield_{p^{b_0f}}$ to a basis $B_1\defeq B_0\cup F$ of this field, we have $P_{0,B_0}(F)\not\in\finfield_{p^{b_0}}$. Then, again using Lemma~\ref{lem:comb_finflds}, choose $f>1$ large enough, set $b_1\defeq b_0f$ and $B_1= B_0\cup F$ such that the same as above holds, replacing $P_0$ by $P_1$ and $b_0$ by $b_1$.
	Proceed by induction to get $B\defeq\bigcup_{i=0}^\infty{B_i}$.
	Apparently, $\gensubsp{B}_+=K$. Assume now that for $P\subseteq\finfield_p[X_1,X_2,\ldots]$ finite, we have $P(B)=K$.
	Then choose $m$ large enough such that $P\subseteq P_m$.
	Now note that
	$$
	P_m(B)=P_m(B_m)\cup\bigcup_{i=m}^\infty{P_{m,B_i}(B_{i+1}\setminus B_i)}.
	$$
	But $P_m(B_m)\subseteq\finfield_{p^{b_m}}$ and $P_{m,B_i}(B_{i+1}\setminus B_i)\subseteq P_{i,B_i}(B_{i+1}\setminus B_i)\subseteq\finfield_{p^{b_{i+1}}}\setminus\finfield_{p^{b_i}}$ for $i\geq m$ by construction.
	
	Hence for $i\geq m$ we have $P_m(B)\cap\finfield_{p^{b_i}}=P_m(B_i)$. But the size of this set is bounded by $\card{P_m}\card{B_i}^m=\card{P_m}b_i^m$, whereas $\card{\finfield_{p^{b_i}}}=p^{b_i}$, which is eventually larger than the first. 
	
	This shows that $P(B)\subseteq P_m(B)\neq K$, as desired.
\end{proof}

We end up with a question: Does there exist a countably infinite group admitting the Bergman property?

\section*{Acknowledgments}
The author wants to thank Philip Dowerk and Andreas Thom for interesting discussions and Yves Cornulier for a comment concerning a first version of this article.
This research was supported by ERC Consolidator Grant No.\ 681207. 

\begin{bibdiv}
\begin{biblist}
\bib{bergman2006generating}{article}{
	title={Generating infinite symmetric groups},
	author={Bergman, George},
	journal={Bulletin of the London Mathematical Society},
	volume={38},
	number={3},
	pages={429--440},
	year={2006},
	publisher={Cambridge University Press}
}
\bib{dalessandro2002uniform}{article}{
	title={Uniform finite generation of compact Lie groups},
	author={D'Alessandro, Domenico},
	journal={Systems \& control letters},
	volume={47},
	number={1},
	pages={87--90},
	year={2002},
	publisher={Elsevier}
}
\bib{dowerk2018bergman}{article}{
	title={The Bergman property for unitary groups of von Neumann algebras},
	author={Dowerk, Philip},
	journal={arXiv preprint arXiv:1804.09515},
	year={2018}
}
\bib{drostegoebel2005uncountable}{article}{
	title={Uncountable cofinalities of permutation groups},
	author={Droste, Manfred},
	author={G{\"o}bel, R{\"u}diger},
	journal={Journal of the London Mathematical Society},
	volume={71},
	number={2},
	pages={335--344},
	year={2005},
	publisher={Cambridge University Press}
}
\bib{drosteholland2005generating}{inproceedings}{
	title={Generating automorphism groups of chains},
	author={Droste, Manfred},
	author={Holland, W.\ Charles},
	booktitle={Forum Mathematicum},
	volume={17},
	number={4},
	pages={699--710},
	year={2005},
	organization={Walter de Gruyter}
}
\bib{lang2002algebra}{misc}{
	title={Algebra, volume 211 of Graduate texts in mathematics},
	author={Lang, Serge},
	year={2002},
	publisher={Springer-Verlag, New York,}
}
\bib{rosendal2009topological}{inproceedings}{
	title={A topological version of the Bergman property},
	author={Rosendal, Christian},
	booktitle={Forum Mathematicum},
	volume={21},
	number={2},
	pages={299--332},
	year={2009},
	organization={Walter de Gruyter GmbH \& Co. KG}
}
\bib{shelah1980problem}{article}{
	title={On a problem of Kurosh, J{\'o}nsson groups, and applications},
	author={Shelah, Saharon},
	journal={Word problems. II. Amsterdam},
	pages={373--394},
	year={1980}
}
\bib{thomaszapletal2012steinhaus}{article}{
	title={On the Steinhaus and Bergman properties for infinite products of finite groups},
	author={Thomas, Simon},
	author={Zapletal, Jind{\v{r}}ich},
	journal={Confluentes Mathematici},
	volume={4},
	number={02},
	pages={1250002},
	year={2012},
	publisher={World Scientific}
}
\end{biblist}
\end{bibdiv} 
\end{document}